\theoremstyle{plain}      
\newtheorem{thm}{Theorem}[section]
\newtheorem{lemma}[thm]{Lemma}     
\newtheorem{prop}[thm]{Proposition}
\theoremstyle{definition}      
\newtheorem{defn}[thm]{Definition}
\newtheorem{remark}[thm]{Remark}
\DeclareMathAlphabet{\doba}{U}{msb}{m}{n}
\gdef\mN{\doba{N}}
\gdef\mR{\doba{R}}
\gdef\mZ{\doba{Z}}
\def\T{\mathrm{T}}
\def\Ric{{\mathrm{Ric}}}
\def\di{{\rm d}}
\def\vol{{\mathrm{vol}}}
\newcommand{\definedas}{\mathrel{\raise.095ex\hbox{\rm :}\mkern-5.2mu=}}
\title{}
\author{Andrei Moroianu, Mihaela Pilca}
\address{Andrei Moroianu \\ Université Paris-Saclay, CNRS,  Laboratoire de mathématiques d'Orsay, 91405, Orsay, France, 
and Institute of Mathematics “Simion Stoilow” of the Romanian Academy, 21 Calea Grivitei, 010702 Bucharest, Romania}
\email{andrei.moroianu@math.cnrs.fr}
\address{Mihaela Pilca\\Fakult\"at f\"ur Mathematik\\
Universit\"at Regensburg\\Universit\"atsstr. 31 
D-93040 Regensburg, Germany, 
and Institute of Mathematics “Simion Stoilow” of the Romanian Academy, 21 Calea Grivitei, 010702 Bucharest, Romania}
\email{mihaela.pilca@mathematik.uni-regensburg.de}
\subjclass[2010]{53C18}
\keywords{Conformal product structures, Einstein metrics}
\begin{document}   
	
	\title{Conformal product structures on compact Einstein manifolds}

	\begin{abstract}  
		In this note we generalize our previous result, stating that if $(M_1,g_1)$ and $(M_2,g_2)$ are compact Riemannian manifolds, then any Einstein metric on the product  $M:=M_1\times M_2$ of the form $g=e^{2f_1}g_1+e^{2f_2}g_2$, with $f_1\in C^\infty(M_2)$ and $f_2\in C^\infty(M_1\times M_2)$, is a warped product metric. Namely, we show that the same conclusion holds if we replace the assumption that the manifold $M$ is globally the product of two compact manifolds by the weaker assumption that $M$ is compact and carries a conformal product structure.
	\end{abstract}
	\maketitle

	\section{Introduction}

A linear connection $D$ on a smooth manifold $M$ is called reducible if its holonomy representation is decomposable, or, equivalently, if the tangent bundle decomposes in a direct sum of two non-trivial $D$-parallel subbundles  $\T M=T_1\oplus T_2$. 
If $D$ is moreover torsion-free and preserves a conformal structure $c$ on $M$ (i.e. if it is a Weyl connection), then the pair $(c,D)$ is called a conformal product structure on $M$ \cite{bm2011}.

A Weyl connection $D$ on $(M,c)$ is determined by the choice of an arbitrary metric $g$ in the conformal class, and by a 1-form $\theta$ on $M$, called the Lee form of $D$ with respect to $g$ (see \eqref{ko} below).
If $\theta$ vanishes, then $D$ is just the Levi-Civita connection of $g$. If $\theta$ is exact, then $D$ is the Levi-Civita of some Riemannian metric conformal to $g$, and if $\theta$ is closed, the same property holds locally. Correspondingly, $D$ is called exact or closed if $\theta$ is exact of closed (and it is easily seen that this does not depend on the choice of metric in the conformal class).

The importance of conformal product structures stems from the Merkulov-Schwachhöfer classification \cite{ms1999} of possible holonomy groups of torsion-free connections. Indeed, when restricted to the special case of Weyl connections, the main result in \cite{ms1999} has the following consequence: if $D$ is a non-closed Weyl connection on a conformal manifold $(M,c)$ of dimension different from $4$, then either the holonomy of $D$ is the whole conformal group, or $D$ is reducible (whence defines a conformal product structure).

Examples of conformal product structures can be constructed on products $M=M_1\times M_2$ of two smooth manifolds. They are defined by a Riemannian metric of the form \begin{equation}\label{cpm}g=e^{2f_1}g_1+e^{2f_2}g_2,
\end{equation}
where $g_i$ are Riemannian metrics on $M_i$ and $f_i$ are smooth functions on $M_1\times M_2$ for $i\in\{1,2\}$, and by a reducible Weyl connection $D$, with Lee form $-\di^{M_1} f_2-\di^{M_2} f_1$ with respect to $g$.
In this case, $(M,[g],D)$ is called a {\em global} conformal product manifold.
Conversely, every manifold with conformal product structure can be written locally in this way (see for instance \cite[Prop. 2.3]{mp2025}).

The problem that we tackle in this note is the following: {\em Which compact conformal product manifolds $(M,c,D)$ carry an Einstein metric $g$ in the conformal class $c$?} In \cite[Thm. 1.1]{mp2024b}, we proved that $g$ is a warped product metric conformal to a Riemannian metric with reducible holonomy, under two extra assumptions:
\begin{enumerate}
\item[$(A)$]  The conformal product structure is global, i.e. $M=M_1\times M_2$, and $g$ is given by \eqref{cpm}.
\item[$(B)$]  The function $f_1$ only depends on $M_2$.
\end{enumerate}

The proof of \cite[Thm. 1.1]{mp2024b} relies heavily on the global expression of $M$ as a product of two compact manifolds. However, this is a very particular situation, since even though the two $D$-parallel distributions $T_1$ and $T_2$ are always integrable on any conformal product, in general their integral leaves are not compact, e.g. on LCP manifolds.

In the present note, we drop completely Assumption $(A)$, and replace $(B)$, which no longer makes sense in the general setting, by:

$(B')$ The restriction of the Lee form of $D$ with respect to $g$ to one of the two distributions $T_1$ or $T_2$ is a closed 1-form on $M$ (in this case the metric $g$ is called special, cf. Definition \ref{defgspecial} below).

Our main result can be stated as follows:

	\begin{thm} \label{mainthm}
		Let $(M,g)$ be a compact Einstein manifold of dimension greater or equal to~$3$ endowed with a conformal product structure $D$, with respect to which the metric $g$ is special. Then $D$ is the Levi-Civita of a Riemannian metric with reducible holonomy conformal to~$g$. Moreover, the conformal factor is constant along one of the $D$-parallel distributions, or equivalently, the pull-back Einstein metric on the universal cover is a warped product metric.
	\end{thm}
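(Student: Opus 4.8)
The plan is to use the hypothesis that $g$ is special to bring it into a local normal form, to insert this into the Einstein equation, and then to run a global argument on the compact manifold $M$ that replaces the integration over the compact factor $M_1$ carried out in \cite{mp2024b}.

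First I would fix notation: after relabelling the two distributions we may assume that $\theta_2\definedas\theta|_{T_2}$ is closed, where $\theta=\theta_1+\theta_2$ is the Lee form of $D$ with respect to $g$ and $\theta_i$ is the component of $\theta$ in $T_i^*$. Recall that the conformal product structure is locally of the form $\bigl(U_1\times U_2,\ e^{2f_1}g_1+e^{2f_2}g_2\bigr)$ with $T_i$ tangent to $U_i$ and $\theta=-\di_1f_2-\di_2f_1$, where $\di_i$ is the partial exterior differential along $U_i$. Closedness of $\theta_2=-\di_2f_1$ then forces $f_1=A(x_1)+B(x_2)$; absorbing $e^{2A}$ into $g_1$ one gets the normal form
\begin{equation*}
g=e^{2f_1}g_1+e^{2f_2}g_2,\qquad f_1=f_1(x_2),\quad f_2=f_2(x_1,x_2),
\end{equation*}
so the conformal factor of the $T_1$-block depends only on the $T_2$-directions; here $\di f_1=\di_2 f_1=-\theta_2$ globalises to a closed $1$-form on $M$ (with $f_1$ a local primitive), $\di_1 f_2=-\theta_1$, and the distributions $T_1,T_2$ are umbilical with mean curvature vectors $-\theta_2^\sharp,-\theta_1^\sharp$. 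The theorem will reduce to showing that \emph{$\theta$ is closed and one of $\theta_1,\theta_2$ vanishes identically} --- i.e.\ that in the normal form $f_2$ is independent of $x_1$, or $f_1$ is locally constant; equivalently, $g$ is locally a warped product --- together with the fact that \emph{$\theta$ is exact on $M$}, which was automatic in the global-product situation of \cite{mp2024b} and in general I expect to follow from a separate compactness argument excluding a nontrivial monodromy of the primitive $f_1$ of $-\theta_2$.

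Granting this, the end of the proof is routine: writing $\theta=\di\rho$ on $M$ (with, say, $\theta_1\equiv0$, the other branch being symmetric), the metric $g'\definedas e^{2\rho}g$ satisfies $\nabla^{g'}=D$; since $f_2-f_1$ depends only on $x_2$ in the normal form, $g'$ is locally a Riemannian product, so it has reducible holonomy --- which already yields the first assertion of the theorem --- and since $\rho$ is a primitive of the $T_2$-valued form $\theta$, the conformal factor $e^{-2\rho}$ of $g=e^{-2\rho}g'$ is constant along $T_1$. Pulling back to the universal cover (where $g'$ is complete) and applying the de Rham decomposition theorem, $\pi^*g$ becomes a global warped product, which is the second assertion.

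The hard part is the reduction to a warped product. I would insert the normal form into $\Ric^g=\lambda g$ and expand it (via the Ricci formulas for a doubly twisted product, or a direct computation); with respect to $\T M=T_1\oplus T_2$ the equation splits into three blocks --- the $T_1\times T_1$-block showing that $g_1$ is Einstein and giving a scalar relation between $f_1$ and $f_2$, the symmetric $T_2\times T_2$-block, and the mixed $T_1\times T_2$-block, a second-order equation for $f_2$ which (using $\dim M\ge3$) already partly constrains the twisting. The new ingredient is to upgrade these pointwise identities to a rigidity statement using the compactness of $M$: whereas in \cite{mp2024b} one integrates a suitable component of the Einstein equation over the \emph{compact} leaf $M_1\times\{x_2\}$ for each $x_2$, here the leaves of $T_1$ need not be compact, so one must work on $M$ as a whole. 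My approach would be to build out of $\theta_1$ (and the $T_1$-block conformal factor $e^{f_1}$) a globally defined non-negative function $F$ on $M$, and to show --- combining all three Einstein blocks with the constancy of $\scal^g=n\lambda$ --- that $F$ satisfies a differential inequality of the form $\Delta^gF\ge h(F)$, or an integral Bochner-type identity $\int_M\bigl(|\nabla\theta_1|_g^2+\cdots\bigr)=0$ with non-negative integrand, so that the maximum principle on the compact $M$ forces $F\equiv0$, whence $\theta_1\equiv0$. The delicate point, and the main obstacle I anticipate, is to identify the correct auxiliary function $F$ and to keep the signs of the lower-order terms under control; this is where the Einstein constant $\lambda$ enters decisively and where the argument is likely to split according to its sign.
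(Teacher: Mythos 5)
Your outline reproduces the correct local normal form and the correct end-game, but the two steps that constitute the actual content of the theorem are left as announcements, and one of them is aimed at the wrong statement. First, your central rigidity step is to build a function $F$ out of $\theta_1$ and force $F\equiv 0$ by a maximum principle, concluding $\theta_1\equiv 0$. That conclusion is too strong to be provable in general: with the labelling ``$\theta_2$ closed'', the theorem's dichotomy allows $\theta_2\equiv 0$ and $\theta_1\not\equiv 0$ (e.g.\ a metric that is locally $g_1+e^{2f_2(x_1)}g_2$ is special with $\theta_2=0$), so no globally defined functional of $\theta_1$ alone can be shown to vanish; what one can and must prove first is only that $\di\theta_1=0$, i.e.\ that $D$ is \emph{closed}. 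The paper does this by a case distinction on the rank: when both distributions have rank $\ge 2$ the closedness is purely local (the argument of \cite[Lemma 3.2]{mp2024b} applies verbatim in the local normal form, no compactness needed), while the rank-one cases are where the new global analysis lives --- a rank-one lemma giving $\lambda\ge 0$ and $\lambda=0\Rightarrow\theta(\xi)=0$, a Myers-plus-exactness argument ($e^\varphi\widetilde\xi=\di\psi$ on a compact universal cover has no zeros, contradiction) excluding $\lambda>0$, and, when the rank-one factor is the one carrying the closed component, a Bochner computation for $\xi$ followed by an integral estimate for $\delta(\nabla_\theta\theta)$ forcing $\theta\equiv 0$. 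Your single uniform ``$\Delta F\ge h(F)$'' inequality is exactly the part you defer (``the main obstacle I anticipate''), and as stated it cannot exist in the rank-$\ge 2$ case for the reason above; so the heart of the proof is missing.

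Second, you treat the exactness of $\theta$ as ``a separate compactness argument excluding a nontrivial monodromy of the primitive of $-\theta_2$''. This is not a routine monodromy exclusion: once $D$ is closed but not exact, $(M,[g],D)$ is a compact LCP manifold, and ruling out an Einstein metric in its conformal class is precisely the theorem of Belgun--Flamencourt--Moroianu \cite[Thm.~4.5]{bfm2023}, a substantive input that your sketch replaces by a hope. Likewise, in the exact non-zero case the final dichotomy (conformal factor constant along one of the $D$-parallel distributions, ranks $\ge 2$) does not follow from your normal form alone but from \cite[Lemmas 5.2 and 5.3]{m2019} applied to the reducible metric $g'$ conformal to the Einstein metric $g$. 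Without these two inputs, or substitutes for them, the passage from ``$g$ is locally a (twisted) product'' to the stated global conclusion does not go through.
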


Let us now explain the strategy of the proof. Assume that $D$ is a reducible Weyl structure on $(M,g)$, with $D$-parallel orthogonal direct sum decomposition $\T M=T_1\oplus M_2$. The Lee form of $D$ with respect to $g$ can be written as $\theta=\theta_1+\theta_2$, where $\theta_i:=\theta|_{T_i}$ for $i\in\{1,2\}$. The assumption that $g$ is special just means (up to a change of subscripts) that $\theta_2$ is closed. The crucial step is to show that $\theta_1$ is closed too, i.e. $D$ is closed. Once we know this, we are left with the following three cases:
\begin{itemize} 
\item $\theta=0$. Then $D$ is the Levi-Civita connection of $g$, which is thus a reducible Einstein metric. In this case the universal cover of $(M,g)$ is a Riemannian product of Einstein manifolds with the same Einstein constant. 
\item $\theta$ is exact but non-zero. Then $D$ is the Levi-Civita of a Riemannian metric $g'$ with reducible holonomy, which is globally conformal to $g$, but not homothetic to $g$. By \cite[Lemma 5.2 and 5.3]{m2019}, it follows directly that the ranks of the distributions $T_1$ and $T_2$ are at least 2, and that the conformal factor is constant along one of them. In this case the universal cover of $(M,g)$ is globally conformal to a Riemannian product metric, but a complete classification of conformally Einstein product metrics still lacks (see \cite{k1988}, \cite{kr1997}, \cite{kr2009}, \cite{kr2016}).
\item  $\theta$ is closed but non-exact. This corresponds to the case where $(M,g)$ has an LCP structure \cite{brice2024}. It was shown in \cite[Thm. 4.5]{bfm2023} that there are no Einstein metrics on compact LCP manifolds.
\end{itemize}
In order to prove that $\theta_1$ is closed, we distinguish three cases as in the proof of \cite[Lemma 3.2]{mp2024b}: if the ranks of $T_1$ and $T_2$ are at least 2, or if one of the ranks is 1. In the first case, the argument in the proof of Case 1. of \cite[Lemma 3.2]{mp2024b} can be applied since it is of local nature and does not use compactness. However, in the two other cases, \cite[Lemma 3.2]{mp2024b} relies in a crucial way on the global expression of $M$ as a product of two compact manifolds, so new arguments, based on the Bochner formula and maximum principle, are needed in order to treat the general case when the conformal product is not global.

 {\sc Acknowledgments.} This work was partly supported by the PNRR-III-C9-2023-I8 grant CF 149/31.07.2023 {\em Conformal Aspects of Geometry and Dynamics}.

	\section{Preliminaries}

	\subsection{Conformal product structures} Let $(M,g)$ be a Riemannian manifold. We denote  by $\nabla$ the Levi-Civita connection of $g$, and by $\sharp:\T^*M\to \T M$ and $\flat:\T M\to \T^* M$ the musical isomorphisms defined by $g$, which are $\nabla$-parallel and inverse to each other. In order to simplify notation, we will sometimes denote the metric $g$  by $\langle\cdot,\cdot\rangle$ and the associated norm by $\|\cdot\|$ and we will identify vector fields with their dual $1$-forms with respect to $g$, when there is no risk of confusion.
	\begin{defn}\label{weyl} A {\em Weyl connection} on $(M,g)$ is a torsion-free linear connection $D$  satisfying $Dg=-2\theta\otimes g$ for some $1$-form $\theta\in\Omega^1(M)$, called the {\em Lee form} of $D$ with respect to $g$. 
	\end{defn}

	The following conformal analogue of the Koszul formula~\cite{g1995}, shows that the Lee form determines the Weyl connection:
	\begin{equation}\label{ko} D_XY=\nabla_XY+\theta(Y)X+\theta(X)Y-\langle X, Y\rangle\theta^\sharp,\qquad\forall X,Y\in \Gamma(\T M).
	\end{equation}
	
	Hence, Weyl connections on $(M,g)$ are in one-to-one correspondence to $1$-forms on $M$: to each Weyl connection one associates its Lee form with respect to $g$ and to each $1$-form one associates the Weyl connection defined by ~\eqref{ko}.
	
	A Weyl connection on $(M,g)$ is called closed (resp. exact) if its Lee form is closed (resp. exact). If $\theta=\di\varphi$ is exact, then
	$$D(e^{2\varphi}g)=e^{2\varphi}(2\di\varphi\otimes g+Dg)=e^{2\varphi}(2\theta\otimes g-2\theta\otimes g)=0,$$ 
	so $D$ is the Levi-Civita connection of $e^{2\varphi}g$. Thus exact Weyl connections are Levi-Civita connections of metrics in the conformal class of $g$, and closed Weyl connections are locally (but in general not globally) Levi-Civita connections of metrics conformal to $g$. 
	
	\begin{remark}\label{dg} The above computation also shows that the notion of Weyl connection is conformally invariant, in the sense that if $D$ is a Weyl connection on $(M,g)$ with Lee form $\theta$, then for every conformally equivalent metric $g_1:=e^{2\varphi}g$, $D$ is a Weyl connection on $(M,g_1)$ as well, with Lee form $\theta-\di \varphi$.
	\end{remark}
	
	\begin{defn}\label{cp} A {\em conformal product structure} on $(M,g)$ is a Weyl connection $D$ together with a decomposition of the tangent bundle of $M$ as  \mbox{$\T M=T_1\oplus T_2$}, where $T_1$ and $T_2$ are orthogonal $D$-parallel non-trivial distributions. The {\em rank} of a conformal product structure is defined to be the smallest of the ranks of the two distributions $T_1$ and $T_2$. A conformal product structure $D$ on $(M,g)$ is {\em orientable} if the $D$-parallel distributions $T_1$ and $T_2$ are orientable.
	\end{defn}

Any conformal product structure induces by pull-back an orientable conformal product structure on some finite cover of $(M,g)$ with covering group contained in $\mZ/2\mZ\times \mZ/2\mZ$.

 If $(M,g)$ carries a conformal product structure with Weyl connection $D$ and $D$-parallel orthogonal direct sum decomposition \mbox{$\T M=T_1\oplus T_2$}, then for each $0\le k\le n$, the  bundle of $k$-forms of $M$ splits into direct sums 
$$\Lambda^k(M)=\bigoplus_{p+q=k}\pi_1^*(\Lambda^p T_1)\otimes \pi_2^*(\Lambda^q T_2)=:\Lambda^{p,q}(M),$$
where $\pi_i$ denotes the projection from $\T M$ onto $T_i$, for $i\in\{1,2\}$. Since $D$ is torsion-free and the distributions $T_1,T_2$ are $D$-parallel, they are integrable. Therefore, the exterior differential on $M$ maps $C^\infty(\Lambda^{p,q}(M))$ onto  $C^\infty(\Lambda^{p+1,q}(M)\oplus \Lambda^{p,q+1}(M))$. The projections on the two factors of this direct sum are first order differential operators denoted by $\di_1$ and $\di_2$ and they satisfy the relations:
\begin{equation}\label{dd}
	\di=\di_1+\di_2,\qquad \di_1^2=\di_2^2=\di_1\di_2+\di_2\di_1=0.
\end{equation}
We decompose the Lee form as $\theta=\theta_1+\theta_2$, where $\theta_i:=\theta|_{T_i}$, for $i\in\{1,2\}$.

\begin{remark}\label{remdi}
	On a Riemannian manifold $(M,g)$ endowed with a conformal product structure and Lee form $\theta$, the following identities hold (cf. \cite[Lemma 4.6]{bm2011}):
	$$\di_1\theta_1=0, \quad\di_2\theta_2=0.$$
\end{remark}	

If $D$ is a conformal product structure on $(M,g)$ with Lee form $\theta$, every point of $M$ has a neighbourhood $U$ diffeomorphic to a product $U=U_1\times U_2$ such that $T_i|_U=\T U_i$ and the pull-back of $g$ to $U_1\times U_2$ is of the form $e^{2f_1}g_1+e^{2f_2}g_2$, where $g_i$ are Riemannian metrics on $U_i$ and $f_i\in C^\infty(U_1\times U_2)$ for $i\in\{1,2\}$ (see \cite[Prop. 2.3]{mp2025}). If $U$ can be chosen to be equal to $M$, then $(M,g,D)$ is called a {\em global} conformal product.

\subsection{Closed conformal product structures}\label{lcp} If $M$ is compact and the Weyl connection $D$ of a conformal product structure on $(M,g)$ is closed but not exact, then $(M,[g],D)$ is called a locally conformally product (LCP) manifold. In this case, the lift $\widetilde D$ of $D$ to the universal cover $(\widetilde M, \widetilde g)$ of $(M,g)$ is exact (and reducible by assumption), so there exists a Riemannian metric $h\in[\widetilde g]$ whose Levi-Civita connection is equal to $\widetilde D$. This metric is thus reducible, but it is incomplete, so the global de Rham theorem does not apply, even though $\widetilde M$ is simply connected.

However, Kourganoff \cite{k2019} proved that if $(\widetilde M,h)$ is non-flat, then it still has a global de Rham decomposition $(\widetilde M,h)=\mR^q\times (N,g_N)$, where $\mR^q$ is a flat Euclidean space with $q\ge 1$ and $(N,g_N)$ is an irreducible (incomplete) Riemannian manifold. This is a striking result since it not only shows the existence of a de Rham decomposition of $(\widetilde M,h)$, but also that it has exactly two factors, one of which is flat and complete.

The projection to $M$ of the tangent distribution to the $\mR^q$ factor is called the flat distribution of the LCP manifold $(M,[g],D)$.

\subsection{Special metrics on conformal product structures}

We now define some distinguished classes of metrics on conformal product structures, as follows:

\begin{defn}\label{defgspecial}
	Let $(M,g)$ be a Riemannian manifold endowed with a conformal product structure $D$ with orthogonal $D$-parallel decomposition $\T M=T_1\oplus T_2$. We denote as above by $\theta_1$ and $\theta_2$ the two components of the Lee form of $D$ with respect to $g$. The metric $g$ is called {\it special} with respect to this conformal product structure if one of $\theta_1$ or $\theta_2$ is closed and it is called {\it adapted} if one of $\theta_1$ or $\theta_2$ vanishes identically. 
\end{defn}	

On compact LCP manifolds $(M,c,D)$, a notion of {\it adapted} metrics was also introduced by Flamencourt \cite[Def. 3.8]{brice2024}. He calls a metric $g\in c$ adapted if the Lee form of $D$ with respect to $g$ vanishes on the flat distribution. In fact it is easy to see that the Lee form cannot vanish on the complementary (non-flat) distribution (cf. the proof of Proposition \ref{adsp} below). Therefore, on compact LCP manifolds the two definitions of adapted metrics coincide, so Definition~\ref{defgspecial} can be considered as a generalization of Flamencourt's notion of adapted metrics to the setting of conformal product structures.

Note that applying a convolution and smoothing argument, Flamencourt showed that every LCP manifold carries adapted metrics \cite[Prop.~3.6]{brice2024}. Later on, the authors proved in \cite[Thm. 4.4]{mp2024a} that the Gauduchon metric of a compact LCP manifold is adapted.

We now show that the notions of special and adapted metrics coincide on compact LCP manifolds:

\begin{prop}\label{adsp}
	On a compact LCP manifold, a metric $g$ is adapted if and only if it is special. 
\end{prop}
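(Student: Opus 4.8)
The plan is to prove both implications. One direction is essentially trivial: if $g$ is adapted, then one of $\theta_1$ or $\theta_2$ vanishes identically, hence in particular one of them is closed, so $g$ is special. The substance of the proposition lies in the converse: on a compact LCP manifold, if $g$ is special then it is adapted. So assume $g$ is special, meaning (after possibly relabelling the distributions) that $\theta_2$ is closed. We must show that then either $\theta_1$ or $\theta_2$ actually vanishes. The natural candidate is that $\theta_1$ vanishes when $T_1$ is the flat distribution — or, put the other way, that if $T_1$ denotes the flat distribution of the LCP structure, then adapted means precisely $\theta_1=0$, and we will show $\theta_1=0$.

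First I would recall the structure of the problem on the universal cover, as described in Section~\ref{lcp}. Lift everything to $(\widetilde M,\widetilde g)$; since $D$ is closed, $\widetilde D$ is exact, so there is a metric $h=e^{2\varphi}\widetilde g\in[\widetilde g]$ whose Levi-Civita connection is $\widetilde D$, and by Kourganoff's theorem $(\widetilde M,h)=\mathbb{R}^q\times(N,g_N)$ with the $\mathbb{R}^q$ factor flat and complete, the $(N,g_N)$ factor irreducible. The lift of $\theta_2$ being closed and $\widetilde M$ simply connected means that the lift of $\theta_2$ is exact on $\widetilde M$; but also $\theta=\widetilde{\di\varphi}$ globally on $\widetilde M$ (since $\widetilde D$ is exact with respect to $\widetilde g$, the Lee form of $\widetilde D$ with respect to $\widetilde g$ is $\di\varphi$), so $\theta_i = \di_i\varphi$ for $i\in\{1,2\}$ upstairs. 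Then I would use Remark~\ref{remdi}, $\di_1\theta_1=0$ and $\di_2\theta_2=0$, together with the fact that $\theta_2$ is closed on $M$, hence $\di_1\theta_2=0$ as well on $M$. The key point to extract is that $\varphi$ — or rather the relevant partial data — is forced to be constant along one of the factors.

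The main step is then the following: I want to show that the conformal factor relating $h$ to $\widetilde g$ is constant along the non-flat factor $N$, which is exactly the statement that the Lee form of $\widetilde D$ with respect to $\widetilde g$ vanishes on the $N$-distribution, i.e. that $\theta$ vanishes on the non-flat distribution downstairs. This is where I expect the real work to be, and I would argue it as follows: the irreducibility of $(N,g_N)$ severely constrains which conformal rescalings of $\mathbb{R}^q\times N$ can again have reducible (product) holonomy after pulling back by a deck transformation, or can be compatible with the closedness of $\theta_2$. Concretely, closedness of $\theta_2$ on $M$ forces $\di_2$ of the conformal factor to descend, and combined with the fact that the deck group acts by similarities of $h$ preserving the splitting, a standard argument (the kind used to show the flat distribution is canonically defined) should show that the $N$-component of $\theta$ is both $\widetilde D$-parallel-compatible and exact with bounded primitive, hence vanishes — using that $N$ is irreducible so it carries no nonzero parallel $1$-form, and that any exact form arising here would have to have a primitive invariant under a cocompact-type action. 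Then, with $\theta$ vanishing on the non-flat distribution, $g$ is adapted in Flamencourt's sense, and by the remark preceding the proposition (that the Lee form cannot vanish on the non-flat distribution unless... wait, it vanishes there precisely when the metric is adapted, and it cannot vanish on the flat distribution unless the structure is exact) this is exactly adaptedness in the sense of Definition~\ref{defgspecial}.

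The hard part will be ruling out the \emph{a priori} possibility that $\theta_1$ (on the flat distribution) is closed but nonzero while $\theta_2$ is not closed — equivalently, showing that "special via the flat distribution" already implies adapted. Here I would invoke that if $\theta_1$, the restriction to the flat distribution, is closed on compact $M$, then since $\di_1\theta_1=0$ automatically and now $\di_2\theta_1=0$, the form $\theta_1$ is $\widetilde D$-closed in both directions; but the flat factor $\mathbb{R}^q$ together with the way the deck group acts on it (through a discrete group of similarities / affine maps) forces, by a maximum-principle or Liouville-type argument on the (incomplete but de-Rham-split) cover, that $\theta_1$ be exact with a primitive that is in fact globally defined on $M$, which would make $D$ exact — contradicting the LCP assumption — unless $\theta_1=0$. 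I expect the cleanest route is to reduce everything to the statement in the parenthetical remark the authors already promise ("it is easy to see that the Lee form cannot vanish on the complementary (non-flat) distribution, cf. the proof of Proposition~\ref{adsp}"), so the proof will in fact establish that dichotomy first — the Lee form never vanishes, and more generally is never closed, on the non-flat distribution of a nonexact closed conformal product on a compact manifold — and then the special hypothesis, which says \emph{one} of $\theta_1,\theta_2$ is closed, must be the one on the flat distribution, and the same circle of ideas upgrades "closed on the flat distribution" to "zero on the flat distribution", which is adaptedness.
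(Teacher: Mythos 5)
Your easy direction is fine, and your setup on the universal cover (exactness of $\widetilde D$, the metric $h=e^{2\varphi}\widetilde g$, Kourganoff's splitting $(\widetilde M,h)=\mR^q\times (N,g_N)$, and $\widetilde\theta_i=\di_i\varphi$) matches the paper. But the core of the converse has a genuine gap, and in fact aims at the wrong target. Your ``main step'' is to show that $\varphi$ is constant along the non-flat factor $N$, i.e.\ that the Lee form vanishes on the non-flat distribution. That is not adaptedness (adaptedness is vanishing on the flat distribution), and it is in fact impossible on a compact LCP manifold: if $\theta_2=0$ then $\varphi$ depends only on the $\mR^q$-variable, and a deck transformation $\gamma$ acting as a strict contraction of $h$ (which exists since $D$ is not exact) satisfies $\gamma^*\varphi=\varphi-c$ with $c>0$, while its $\mR^q$-component has a fixed point by completeness --- a contradiction. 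This impossibility is precisely the ``easy to see'' remark preceding Proposition~\ref{adsp}. Relatedly, your closing claim that the restriction of the Lee form to the non-flat distribution is ``never closed'' is false: on an LCP manifold $\theta$ itself is closed, so if either component is closed then both are, and for adapted metrics $\theta_2=\theta$ is closed.

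The argument actually needed, which your sketch never produces, is the following. From $\di\theta_2=0$ one gets $\di_1\di_2\varphi=0$ on $\widetilde M$, hence $\varphi=\varphi_1+\varphi_2$ with $\varphi_1\in C^\infty(\mR^q)$ and $\varphi_2\in C^\infty(N)$. Choosing $\gamma=(\gamma_1,\gamma_2)$ in the deck group which is a strict contraction of $h$, the relation $\gamma^*\varphi=\varphi-c$ splits into two constants; since $\gamma_1$ is a contraction of the \emph{complete} flat factor $\mR^q$, it has a fixed point $x_0$, which forces $\gamma_1^*\varphi_1=\varphi_1$, and iterating ($\gamma_1^k(x)\to x_0$) gives that $\varphi_1$ is constant, i.e.\ $\theta_1=0$ on the flat distribution, which is adaptedness. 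Your proposed substitutes do not close this gap: irreducibility of $N$ excludes nonzero \emph{parallel} $1$-forms, but $\theta_2=\di_2\varphi_2$ is not parallel; exactness of $\theta_1$ on $M$ does not make $D$ exact (it says nothing about $\theta_2$); and a ``maximum-principle or Liouville-type argument'' has no obvious traction on a cover that is noncompact and partly incomplete. The essential missing ingredients are the additive splitting of $\varphi$ and the Banach fixed-point argument on the complete flat factor.
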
	

\begin{proof}
	Let $(M, c, D)$ be a compact LCP manifold with $D$-parallel decomposition of the tangent bundle $\T M=T_1\oplus T_2$, with the convention that $T_1$ is the flat distribution and $T_2$ is the non-flat distribution (since the two $D$-parallel distributions play non-symmetric roles). 
	
	If $g$ is an adapted metric on $(M, c, D)$, then by definition $\theta_1=0$, so in particular $\di\theta_1=0$, meaning that $g$ is special, according to Definition~\ref{defgspecial}. 
	
	Conversely, if $g$ is assumed to be a special metric on $(M, c, D)$, then $\di\theta_1=\di\theta_2=0$ (since one of them has to vanish by definition, and $\di\theta=0$).  
	
	Consider the pull-back structure $(\widetilde g,\widetilde D)$ to the universal cover $\pi:\widetilde M\to M$, with Lee form $\widetilde\theta=\pi^*\theta$. Since $\theta$ is closed, $\widetilde \theta$ is exact, so one can write $\widetilde\theta=\di\varphi$, for some function $\varphi\in C^\infty(\widetilde M)$. By Remark \ref{dg}, $\widetilde D$ is the Levi-Civita connection of the metric $h:=e^{2\varphi}\widetilde g$ on $\widetilde M$. Since $\di\theta_2=0$ we get $0=\di\di_2\varphi=\di_1\di_2\varphi$ on $\widetilde M$, so $\varphi=\varphi_1+\varphi_2$, with $\varphi_1\in C^\infty(\mR^q)$ and $\varphi_2\in C^\infty(N)$ (according to the decomposition $(\widetilde M,h)=\mR^q\times (N,g_N)$ in \S\ref{lcp}). 
	
	Every element $\gamma\in\pi_1(M)$ acts on $\widetilde M$ isometrically with respect to $\widetilde g$ and homothetically with respect to $h$. Moreover, it has the form $\gamma=(\gamma_1,\gamma_2)$, where $\gamma_1$ is a diffeomorphism of $\mR^q$ and $\gamma_2$ is a diffeomorphism of $N$. Pick $\gamma$ which is a strict homothety of $h$ (this exists since $D$ is not exact). Replacing it with $\gamma^{-1}$ if necessary, one can assume that $\gamma$ is a contraction of $h$, thus $\gamma^*\varphi=\varphi-c$ for some positive real number $c$. 
	
	This relation reads $(\gamma_1^*\varphi_1-\varphi_1)+(\gamma_2^*\varphi_2-\varphi_2)=c$, and since the functions in the brackets only depend on $\mR^q$ and $N$ respectively, they are both constant. However, $\gamma_1$ is a contraction on the complete metric space $\mR^q$, so has a fixed point $x_0$. At this point we get $(\gamma_1^*\varphi_1)(x_0)-\varphi_1(x_0)=0$ so $\gamma_1^*\varphi_1=\varphi_1$ on $\mR^q$. For every $x\in\mR^q$ and $k\in\mN$ we thus get $\varphi_1(x)=\varphi_1(\gamma_1^k(x))$. Using that $\gamma_1^k(x)\to x_0$ as $k\to\infty$, we thus get $\varphi_1(x)=\varphi_1(x_0)$, so $\varphi_1$ is constant, whence $\varphi$ is a function on $N$. Therefore $\theta_1=0$, thus showing that the metric $g$ is adapted.
	\end{proof}	

\section{Conformal product structures of rank $1$}
In this section we obtain some preliminary results about conformal product structures of rank $1$, which will be used in the proof of Theorem~\ref{mainthm}. We start with the following:

\begin{lemma}\label{rem}
	If the rank of an orientable conformal product structure~$D$ on an $n$-dimensional Riemannian manifold $(M,g)$ is equal to $1$, then any unit length vector field $\xi$ spanning the $1$-dimensional distribution satisfies the following equality:
	\begin{equation}\label{koxi}
		\nabla_X\xi=-\theta(\xi)X+\langle X, \xi\rangle\theta,\qquad\forall X\in \Gamma(\T M).
	\end{equation}		
Consequently, we obtain
	\begin{equation}\label{codiffxi}
	\nabla_\theta\xi=0,\qquad	\delta\xi=(n-1)\theta(\xi), \qquad \di\xi=\xi\wedge \theta.
	\end{equation}		
\end{lemma}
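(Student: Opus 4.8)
\emph{Proof strategy.} The whole statement reduces to the $D$-parallelism of the rank-one distribution. Since the conformal product structure is orientable, the $1$-dimensional $D$-parallel distribution, which I will call $T_1$ (with no loss of generality), is spanned by a globally defined $g$-unit vector field $\xi$. As $T_1$ is $D$-parallel, $D_X\xi$ is again a section of $T_1$ for every $X\in\Gamma(\T M)$, so $D_X\xi=\alpha(X)\xi$ for some $1$-form $\alpha$. The first step is to identify $\alpha$: differentiating $\langle\xi,\xi\rangle\equiv 1$ and using the defining relation $Dg=-2\theta\otimes g$ of a Weyl connection gives $0=(D_Xg)(\xi,\xi)+2\langle D_X\xi,\xi\rangle=-2\theta(X)+2\alpha(X)$, hence $\alpha=\theta$ and $D_X\xi=\theta(X)\xi$.

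The second step is to substitute $Y=\xi$ into the conformal Koszul formula \eqref{ko}: its right-hand side becomes $\nabla_X\xi+\theta(\xi)X+\theta(X)\xi-\langle X,\xi\rangle\theta^\sharp$, which must equal $D_X\xi=\theta(X)\xi$; cancelling the common term $\theta(X)\xi$ and solving for $\nabla_X\xi$ yields exactly \eqref{koxi}.

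The third step is to read off the identities \eqref{codiffxi} from \eqref{koxi} by routine tensor calculus: putting $X=\theta^\sharp$ gives $\nabla_{\theta^\sharp}\xi=-\theta(\xi)\theta^\sharp+\theta(\xi)\theta^\sharp=0$; tracing \eqref{koxi} over a local orthonormal frame $\{e_i\}$ and using $\sum_i\langle e_i,\xi\rangle e_i=\xi$ gives $\mathrm{div}\,\xi=-n\theta(\xi)+\theta(\xi)=-(n-1)\theta(\xi)$, equivalently $\delta\xi=(n-1)\theta(\xi)$; and antisymmetrising the relation $\langle\nabla_X\xi,Y\rangle=-\theta(\xi)\langle X,Y\rangle+\xi(X)\theta(Y)$ in $X$ and $Y$ gives $\di\xi(X,Y)=\xi(X)\theta(Y)-\xi(Y)\theta(X)=(\xi\wedge\theta)(X,Y)$.

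I do not expect any genuine obstacle: the only points that require a little care are the identification $\alpha=\theta$ in the first step — which is exactly where the unit-length normalisation of $\xi$ together with the Weyl condition $Dg=-2\theta\otimes g$ are used — and keeping the sign conventions for $\delta$ and for the wedge product straight in the last step. Orientability serves only to make $\xi$ globally defined; without it the same formulas would still hold locally, since both sides of \eqref{koxi} and \eqref{codiffxi} change sign under $\xi\mapsto-\xi$.
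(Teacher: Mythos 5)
Your proof is correct and follows essentially the same route as the paper: write $D_X\xi=\alpha(X)\xi$ by $D$-parallelism of the rank-one distribution, identify $\alpha=\theta$, and then read \eqref{koxi} and \eqref{codiffxi} off the conformal Koszul formula \eqref{ko} by direct computation. The only (harmless) difference is that you get $\alpha=\theta$ directly from $Dg=-2\theta\otimes g$ applied to $\langle\xi,\xi\rangle=1$, whereas the paper pairs the Koszul formula with $\xi$ and uses $\langle\nabla_X\xi,\xi\rangle=0$.
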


\begin{proof}
	Assume for instance that $T_1$ has rank $1$. The orientability assumption implies that there is a unit length vector field spanning $T_1$. Let $\xi$  be such a vector field. Since the distribution $T_1$ is $D$-parallel, there exists a $1$-form $\alpha$ on $M$, such that $D_X\xi=\alpha(X)\xi$, for all $X\in \Gamma(\T M)$. Then \eqref{ko} yields:
	\begin{equation}
		\alpha(X)\xi= D_X\xi=\nabla_X \xi+\theta(\xi)X+\theta(X)\xi-\langle X, \xi\rangle\theta^\sharp,\qquad\forall X\in\Gamma(\T M),
	\end{equation}		
	or, equivalently:
	\begin{equation}\label{eqvectxi}
		\nabla_X \xi=(\alpha(X)-\theta(X))\xi-\theta(\xi)X+\langle X, \xi\rangle\theta^\sharp,\qquad\forall X\in\Gamma(\T M).
	\end{equation}		
	Since $\xi$ has unit length, we have $g(\nabla_X\xi, \xi)=0$. Taking the scalar product with $\xi$ in Equality~\eqref{eqvectxi} thus yields $\alpha(X)-\theta(X)=0$, for all vector fields $X$, so $\alpha=\theta$. Hence \eqref{eqvectxi} yields~\eqref{koxi}. Furthermore, the first part of \eqref{codiffxi} follows by taking $X=\theta$ in \eqref{koxi} and the last two are obtained by direct computation, using a local orthonormal basis $\{e_i\}_{i=\overline{1,n}}$:
	\begin{equation*}
		\delta\xi=-\sum_{i=1}^n e_i\lrcorner \nabla_{e_i}\xi=n\theta(\xi)-\theta(\xi)=(n-1)\theta(\xi),
	\end{equation*}		
	\begin{equation*}
		\di\xi=\sum_{i=1}^n e_i\wedge \nabla_{e_i}\xi=\sum_{i=1}^n e_i\wedge(-\theta(\xi)e_i+\langle e_i, \xi\rangle\theta)=\xi\wedge \theta.
	\end{equation*}		
\end{proof}

We now specialize further to the case where the rank 1 conformal product structure is compatible with an Einstein metric.

\begin{lemma}\label{lemmalambda}
	Let $(M,g)$ be an Einstein manifold of dimension $n\geq 3$ carrying an orientable conformal product structure $D$ of rank $1$ with $D$-parallel orthogonal decomposition $\T M=T_1\oplus T_2$. If $\lambda$ denotes the Einstein constant of $(M,g)$, then the following statements hold:
	\begin{enumerate}
	\item[$(i)$] Any unit length vector field $\xi$ spanning the $1$-dimensional distribution satisfies
	\begin{equation}\label{eqlambda}
		\lambda=(n-2)\xi(\theta(\xi))-\delta\theta.
	\end{equation}
\item[$(ii)$]  If $M$ is compact, then $\lambda\geq 0$.
\item[$(iii)$]  If $M$ is compact and $\lambda=0$, then $\theta(\xi)=0$.
\end{enumerate}
\end{lemma}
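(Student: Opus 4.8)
The plan is to prove the three parts in order, deriving a Bochner-type identity for the vector field $\xi$ and then exploiting compactness via integration and the maximum principle.

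For part $(i)$, I would compute the Ricci curvature in the direction $\xi$ using the structure equation \eqref{koxi}. Concretely, from $\nabla_X\xi=-\theta(\xi)X+\langle X,\xi\rangle\theta$ one obtains a formula for the curvature tensor $R(X,Y)\xi=(\nabla_X\nabla_Y-\nabla_Y\nabla_X-\nabla_{[X,Y]})\xi$ by differentiating once more; tracing over an orthonormal frame $\{e_i\}$ gives $\Ric(\xi,\xi)=\sum_i\langle R(e_i,\xi)\xi,e_i\rangle$ in terms of $\xi(\theta(\xi))$, $\delta\theta$, and $\|\theta\|^2$-type quantities. Since $g$ is Einstein with constant $\lambda$, we have $\Ric(\xi,\xi)=\lambda$ (as $\|\xi\|=1$), and after simplification (using $\nabla_\theta\xi=0$ from \eqref{codiffxi} and Remark~\ref{remdi} to kill the cross terms $\di_1\theta_1=0$) the identity \eqref{eqlambda} should drop out. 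The routine part here is bookkeeping the $\theta$-quadratic terms; the key simplification is that the rank-$1$ condition forces $\theta_1=\theta(\xi)\xi$, so $\di_1\theta_1=0$ becomes a scalar PDE that eliminates unwanted terms.

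For part $(ii)$, the natural move is to integrate \eqref{eqlambda} over the compact manifold $M$. The term $\int_M\delta\theta\,\vol=0$ by Stokes, so $\lambda\vol(M)=(n-2)\int_M\xi(\theta(\xi))\,\vol$. To handle the right-hand side I would rewrite $\xi(\theta(\xi))=\nabla_\xi(\theta(\xi))$ and use the divergence formula: $\int_M\nabla_\xi h\,\vol=\int_M h\,\delta\xi\,\vol$ for any function $h$ (integrating by parts, since $\div(h\xi)=\nabla_\xi h+h\,\div\xi=\nabla_\xi h-h\,\delta\xi$... being careful with sign conventions, $\delta\xi=-\div\xi$). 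With $h=\theta(\xi)$ and $\delta\xi=(n-1)\theta(\xi)$ from \eqref{codiffxi}, this gives $\int_M\xi(\theta(\xi))\,\vol=(n-1)\int_M\theta(\xi)^2\,\vol\geq 0$. Hence $\lambda\vol(M)=(n-2)(n-1)\int_M\theta(\xi)^2\,\vol\geq 0$, and since $n\geq 3$ we conclude $\lambda\geq 0$.

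For part $(iii)$, if $\lambda=0$ then the displayed integral identity forces $\int_M\theta(\xi)^2\,\vol=0$, hence $\theta(\xi)\equiv 0$ directly. (Alternatively, going back to \eqref{eqlambda} with $\lambda=0$ and applying the maximum principle to the function $\theta(\xi)$ using the relation between $\xi(\theta(\xi))$ and $\delta\theta$ would also work, but the integral argument from $(ii)$ is cleaner and already delivers the conclusion.) The main obstacle, I expect, is part $(i)$: getting the curvature computation exactly right so that all the quadratic-in-$\theta$ terms cancel and only $(n-2)\xi(\theta(\xi))-\delta\theta$ survives. This is where the orientability and rank-$1$ hypotheses are genuinely used, via the clean form of $\nabla_X\xi$ and the identities in \eqref{codiffxi}; once \eqref{eqlambda} is established, parts $(ii)$ and $(iii)$ are short consequences of integration by parts on the compact manifold.
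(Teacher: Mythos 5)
Your proposal is correct and takes essentially the same approach as the paper: part $(i)$ by tracing the curvature of $\xi$ derived from \eqref{koxi} against the Einstein condition (the paper computes $\Ric(X,\xi)$ with $X$ parallel at a point and then sets $X=\xi$, which avoids the Lie-bracket bookkeeping; the quadratic $\theta$-terms cancel using only $\nabla_\theta\xi=0$ from \eqref{codiffxi}, so your appeal to $\di_1\theta_1=0$ is unnecessary — it is vacuous in rank $1$), and parts $(ii)$–$(iii)$ by the identical integration by parts yielding $\lambda\,\mathrm{Vol}(M,g)=(n-1)(n-2)\int_M(\theta(\xi))^2\,\vol_g$.
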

\begin{proof}
	$(i)$ By Lemma~\ref{rem}, a unit length vector field $\xi$ spanning the $1$-dimensional distribution satisfies \eqref{koxi}. We then compute using a local orthonormal basis $\{e_i\}_{i=1,n}$  parallel at the point where the computation is done and for any vector field $X\in \Gamma(\T M)$ which is also assumed to be parallel at the given point:
	\begin{equation*}
		\begin{split}
			\lambda\langle X, \xi\rangle&=\mathrm{Ric}(X, \xi)=\sum_{i=1}^n \langle R_{e_i, X}\xi, e_i\rangle=\sum_{i=1}^n \left(\langle \nabla_{e_i}\nabla_X\xi, e_i\rangle-\langle\nabla_X\nabla_{e_i}\xi, e_i\rangle\right)\\
			&\overset{\eqref{koxi}}{=}\sum_{i=1}^n \left(\langle \nabla_{e_i}(-\theta(\xi)X+\langle X, \xi\rangle\theta), e_i\rangle-X(\langle\nabla_{e_i}\xi, e_i\rangle)\right)\\
			&=-X(\theta(\xi))+\langle X,\nabla_\theta \xi\rangle-\langle X, \xi\rangle \delta\theta+X(\delta\xi)\\
			&\overset{\eqref{codiffxi}}{=}-X(\theta(\xi))-\langle X, \xi\rangle \delta\theta+(n-1)X(\theta(\xi))\\
			&=(n-2)X(\theta(\xi))-\langle X, \xi\rangle \delta\theta.
		\end{split}
	\end{equation*}		
	Hence, we obtain the following identity
	\begin{equation*}
		\lambda\xi=(n-2)\di(\theta(\xi))-(\delta\theta)\xi,
	\end{equation*}
	which by taking the scalar product with $\xi$, which is a unit vector field, yields \eqref{eqlambda}.
	
	$(ii)$ If $M$ is compact, then integrating \eqref{eqlambda} over $M$, we obtain:
	$$\displaystyle\lambda \mathrm{Vol}(M,g)=(n-2)\int_M \xi(\theta(\xi))\,\vol_g=(n-2)\int_M \theta(\xi)\delta\xi\, \vol_g\overset{\eqref{codiffxi}}{=}(n-1)(n-2)\int_M (\theta(\xi))^2\,\vol_g,$$
	which shows that $\lambda\geq 0$, because the dimension $n$ is assumed to be greater or equal to $3$. 
	
	$(iii)$ If moreover $\lambda=0$, then the above equality implies that $\theta(\xi)=0$.
\end{proof}	

\section{Proof of Theorem~\ref{mainthm}} 

Let $(M,g)$ be a compact Einstein manifold of dimension $n\geq3$, endowed with a conformal product structure $D$ with orthogonal $D$-parallel decomposition $\T M=T_1\oplus T_2$, with respect to which the metric $g$ is special, {\em i.e.} $\theta_1$ or  $\theta_2$ is closed. In order to fix the notation, we assume that $\di\theta_2=0$. Our aim is to show that $\di\theta_1$ vanishes as well, i.e. the Weyl structure $D$ is closed. Up to taking a finite cover of $(M,g)$, we can assume that the conformal product structure is orientable. Note that this does not change the conclusion of Theorem~\ref{mainthm}.
In order to prove that $\di\theta_1=0$ we consider the following three cases.

{\bf Case 1:  $\mathrm{rank}(T_1)\geq 2$ and $\mathrm{rank}(T_2)\geq 2$.} Then the argument in the proof of Case 1. of \cite[Lemma 3.2]{mp2024b} can be applied since it is of local nature, and in particular does not require compactness. Indeed, as mentioned before, every point of a manifold with conformal product structure has a neighbourhood carrying a global conformal product structure, thus satisfying all hypotheses of \cite[Thm 1.1]{mp2024b}, except compactness, which is not needed in the argument in the case where the rank of the conformal product structure is at least 2. With respect of the local form of the metric given by \eqref{cpm}, we have $\theta_1=-\di_1 f_2$. 
By \cite[Lemma 3.2]{mp2024b} we thus obtain that $\di_1\di_2 f_2=0$ in the neighbourhood of every point of $M$, which implies that $\di\theta_1=0$ everywhere on $M$.

{\bf Case 2: $\mathrm{rank}(T_1)=1$.} Let $\xi$ denote a unit vector field spanning the $1$-dimensional oriented distribution $T_1$. The Lee form $\theta$ decomposes then as $\theta=\theta(\xi)\xi+\theta_2$, and $\theta_2(\xi)=0$. Lemma~\ref{lemmalambda} implies that the Einstein constant $\lambda$ of $(M,g)$ is non-negative. We will show that in fact in this case $\lambda$ must vanish. 

Assume for a contradiction that $\lambda>0$. By Myers' Theorem, $\pi_1(M)$ is finite, so the universal cover $\widetilde M$ of $M$ is compact. We consider the conformal product structure $\T\widetilde M=\widetilde T_1\oplus\widetilde T_2$ with Lee form $\tilde\theta$ induced by pull-back on $\widetilde M$, and denote by $\widetilde \xi$ the unit vector field tangent to $\widetilde T_1$ projecting onto $\xi$. The assumption $\di\theta_2=0$ implies that there exists a function $\varphi\in\mathcal{C}^{\infty}(\widetilde M)$ such that $\widetilde\theta_2=\di\varphi$. By \eqref{codiffxi} we then have:
$$\di\widetilde\xi=\widetilde\xi\wedge\widetilde\theta=\widetilde\xi\wedge\widetilde\theta_2=\widetilde\xi\wedge\di\varphi,$$
which implies that $\displaystyle\di (e^\varphi \widetilde\xi)=e^\varphi(\di\varphi\wedge\widetilde\xi+\di\widetilde\xi)=0.$ Hence there exists a function $\psi\in\mathcal{C}^{\infty}(\widetilde M)$, such that 
$e^\varphi \widetilde\xi=\di\psi.$

Since $\widetilde M$ is compact, the function $\psi$ has a global extremum, for which the right-hand side of the above equality vanishes. But this contradicts the fact that the left-hand side is nowhere vanishing, since $\xi$ is a unit vector field. Hence, our assumption is false, which yields $\lambda=0$.

According to Lemma~\ref{lemmalambda}, $\lambda=0$ implies that $\theta(\xi)=0$. Hence $\theta_1$ vanishes, so in particular $\di\theta_1=0$. 

{\bf Case 3: $\mathrm{rank}(T_2)=1$.} Let $\xi$ denote a unit vector field spanning the distribution $T_2$. Then we have $\theta_2=\theta(\xi)\xi$ and thus $\theta_1=\theta-\theta(\xi)\xi$. The assumption $\di\theta_2=0$ reads:
\begin{equation}\label{eqassumpt}
0=\di\theta_2=\di(\theta(\xi))\wedge\xi+\theta(\xi)\di\xi\overset{\eqref{codiffxi}}{=}\di(\theta(\xi))\wedge\xi+\theta(\xi)\xi\wedge\theta=\xi\wedge (\theta(\xi)\theta-\di(\theta(\xi))).
\end{equation}

Let us denote by $f :=\theta(\xi)$ in order to simplify the notation. Then \eqref{eqassumpt} reads
\begin{equation}\label{eqassumpt1}
	\xi\wedge (f\theta-\di f)=0.
\end{equation}

{\bf Claim A.}  The following equality holds:
\begin{equation}\label{thetaxi}
	f\theta\wedge\xi=0.
\end{equation}		
{\it Proof of Claim A.}  We will use the Bochner formula applied to $\xi$:
\begin{equation}\label{bochner}
	\Delta\xi=\nabla^*\nabla\xi+\Ric(\xi).
\end{equation}	
Let $\{e_i\}_{i=\overline{1,n}}$ be a local orthonormal basis parallel at the point where the computation is done. We start by expressing the left-hand side of \eqref{bochner}:
\begin{equation}\label{lapl}
	\begin{split}
	\Delta \xi&=\di\delta\xi+\delta\di\xi\overset{\eqref{codiffxi}}{=}(n-1)\di f-\sum_{i=1}^n e_i\lrcorner(\nabla_{e_i} \xi\wedge\theta+\xi\wedge\nabla_{e_i}\theta)\\
	&=(n-1)\di f+(\delta\xi)\theta+\nabla_\theta\xi-\nabla_\xi\theta-(\delta\theta)\xi\\
	&=(n-1)\di f+(n-1)f\theta-\nabla_\xi\theta-(\delta\theta)\xi,
	\end{split}
\end{equation}	
where for the last equality we used again \eqref{codiffxi}.
We now compute  the connection Laplacian:
\begin{equation}\label{connlapl}
	\nabla^*\nabla \xi=-\sum_{i=1}^{n}\nabla_{e_i}\nabla_{e_i}\xi\overset{\eqref{koxi}}{=}\nabla_{e_i}(f e_i-\langle e_i, \xi\rangle\theta)=\di f+(n-1)f\theta-\nabla_{\xi}\theta.
\end{equation}	
Since $g$ is Einstein with Einstein constant $\lambda$, we have in particular:
\begin{equation}\label{ric}
	\Ric(\xi)=\lambda \xi.
\end{equation}	
Altogether, applying the identities \eqref{lapl}-\eqref{ric}, the Bochner formula \eqref{bochner} reads:
$$(n-2)\di f=(\lambda+\delta\theta)\xi,$$
which, since $n\geq 3$, implies that $\xi\wedge\di f=0$.  Substituting this into \eqref{eqassumpt1}  proves Claim A.\qed

{\bf Claim B.} The following equalities hold:
\begin{equation}\label{claim}
	\lambda=0, \quad \theta(\xi)=0, \quad \delta\theta=0.
\end{equation}	
{\it Proof of Claim B.} 
Let $x_0\in M$ be an extremum point of the function $f=\theta(\xi)$ defined on the compact manifold $M$. We assume by contradiction that $f(x_0)\neq 0$. Then there exists a neighbourhood $U$ of $x_0$, such that $f|_U\neq 0$, which together with  \eqref{thetaxi} implies that $\theta\wedge\xi=0$ on $U$. Hence, on $U$ we have $\theta=f\xi$ and
$$\delta\theta=f\delta\xi-\xi( f)\overset{\eqref{codiffxi}}{=}(n-1)f\theta(\xi)-\xi( f)=(n-1)f^2-\xi(f).$$
 On the other hand, by \eqref{eqlambda} we have $\delta\theta=(n-2)\xi(f)-\lambda$, which replaced in the above equality shows that the following identity holds on $U$:
 $$\lambda+(n-1)f^2=(n-1)\xi(f).$$
In particular at the extremum point $x_0\in U$ we obtain
\begin{equation}\label{lambda}
\lambda+(n-1)f^2(x_0)=0,
\end{equation}
which leads to a contradiction, because $\lambda\geq 0$ by Lemma~\ref{lemmalambda} and $f(x_0)\neq 0$. Hence our assumption was false, which implies that the function $f=\theta(\xi)$ vanishes at all its extremum points and is thus identically zero on $M$. Equality \eqref{eqlambda} then reads $\lambda=-\delta\theta$. Since $\lambda\ge 0$ by Lemma \ref{lemmalambda} (2), integrating over $M$ yields $\lambda=0$, whence $\delta\theta=0$, thus finishing the proof. \qed

Claim B together with \eqref{koxi} yields
 \begin{equation}\label{koxi1}
	\nabla_X\xi=\langle X, \xi\rangle\theta,\qquad\forall X\in \Gamma(\T M).
\end{equation}		
This further implies that $\delta\xi=0$. Applying the exterior derivative to the third equality of~\eqref{codiffxi}, we obtain:
$$0=\di^2\xi=\di(\xi\wedge\theta)=\di\xi\wedge\theta-\xi\wedge\di\theta=\xi\wedge\theta\wedge\theta-\xi\wedge\di\theta=-\xi\wedge\di\theta.$$
Hence, $\di\theta\wedge\xi=0$, showing that there exists a form $\alpha\in\Omega^1(M)$, such that
\begin{equation}\label{dtheta}
\di\theta=\xi\wedge \alpha.
\end{equation}
The equality determines the form $\alpha$ up to its component along $\xi$. In order to make $\alpha$ uniquely defined, we impose that $\alpha(\xi)=0$.
We denote by $T:=\nabla\theta$ and compute for every tangent vector $X$:
\begin{equation*}
\begin{split}
\alpha(X)&=(\xi\wedge\alpha)(\xi, X)\overset{\eqref{dtheta}}{=}\di\theta(\xi, X)=\langle \nabla_\xi \theta, X\rangle - \langle \nabla_X \theta, \xi\rangle\\
&\overset{\theta(\xi)=0}{=}\langle T\xi, X\rangle +\langle \theta, \nabla_X\xi\rangle\overset{\eqref{koxi1}}{=}\langle T\xi, X\rangle +|\theta|^2 \langle X,\xi\rangle=\langle T\xi+|\theta|^2\xi, X\rangle,
\end{split}
\end{equation*}
showing that:
\begin{equation}\label{defalpha}
\alpha=T\xi+|\theta|^2\xi.
\end{equation}
%
%

We now compute the following codifferential:
\begin{equation*}
	\begin{split}
-\delta(\nabla_\theta \theta)&=\sum_{i=1}^n \langle \nabla_{e_i}\nabla_\theta \theta, e_i\rangle=\mathrm{Ric}(\theta, \theta)+\sum_{i=1}^n \left(\langle \nabla_\theta \nabla_{e_i} \theta, e_i\rangle + \langle \nabla_{\nabla_{e_i} \theta } \theta, e_i\rangle\right)\\
&\overset{\lambda=0}{=}\sum_{i=1}^n \left(\theta(\langle \nabla_{e_i} \theta, e_i\rangle) + \di\theta(\nabla_{e_i} \theta,e_i)+\langle \nabla_{e_i} \theta, \nabla_{e_i} \theta\rangle\right)\\
&\overset{\eqref{dtheta}}{=}-\theta(\delta\theta)+\sum_{i=1}^n (\xi\wedge \alpha) (\nabla_{e_i} \theta,e_i)+\sum_{i=1}^n |\nabla_{e_i} \theta|^2\\
&\overset{\delta\theta=0}{=} \langle \xi, \nabla_{\alpha}\theta\rangle-\langle\alpha, \nabla_{\xi} \theta\rangle + |\nabla \theta|^2\\
&=\di\theta(\alpha, \xi)+|\nabla \theta|^2\overset{\eqref{dtheta}}{=}  (\xi\wedge\alpha)(\alpha, \xi)+|\nabla \theta|^2\overset{\alpha(\xi)=0}{=} -|\alpha|^2+|T|^2,
	\end{split}	
\end{equation*}		
whence:
\begin{equation}\label{eqdelta}
	\delta(\nabla_\theta \theta)=|\alpha|^2-|T|^2.
\end{equation}
Let $\{\xi, f_i\}_{i=\overline{1, n-1}}$ be a local orthonormal basis of $\T M$. We have  
$|T|^2=|T(\xi)|^2+\displaystyle\sum_{i=1}^{n-1}|T(f_i)|^2$, and 
$$|T(\xi)|^2\overset{\eqref{defalpha}}{=}|\alpha-|\theta|^2\xi|^2\overset{\alpha(\xi)=0}{=}|\alpha|^2+|\theta|^4.$$
By \eqref{eqdelta} we thus get $$\delta(\nabla_\theta \theta)=-|\theta|^4-\sum_{i=1}^{n-1}|T(f_i)|^2\leq -|\theta|^4.$$ 
Integrating this inequality over the compact manifold $M$ yields that $\theta$ vanishes identically, so in particular $\di\theta_1=0$.

In conclusion, we have shown that the Weyl structure $D$ is closed.  If $D$ were not exact, $g$ would be an Einstein metric on the compact LCP manifold $(M,[g],D)$, which is impossible by \cite[Thm. 4.5]{bfm2023}. Thus $D$ has to be exact, so $g$ is globally conformal to a Riemannian metric with reducible holonomy. The last statement of Theorem~\ref{mainthm} follows directly from \cite[Lemma 5.2 and 5.3]{m2019}.
This concludes the proof.

\end{document}